\renewcommand{\phi}{\varphi}
\renewcommand{\epsilon}{\varepsilon}
\renewcommand{\kappa}{\varkappa}       
\newtheorem{theorem}{Theorem}
\newtheorem{proposition}{Proposition}
\theoremstyle{definition}
\theoremstyle{remark}
\theoremstyle{definition}
\newtheorem*{rem*}{Remark}
\newtheorem*{acknow*}{Acknowledgements}
\newtheorem*{examples*}{Examples}
\theoremstyle{plain}
\newtheorem*{theorem*}{Theorem}
\newtheorem{question}{Question}
\newenvironment{proof-sketch}{\noindent{\bf Sketch of Proof}\hspace*{1em}}{\qed\bigskip}
\newenvironment{proof-idea}{\noindent{\bf Proof Idea}\hspace*{1em}}{\qed\bigskip}
\newenvironment{proof-of-lemma}[1]{\noindent{\bf Proof of Lemma #1}\hspace*{1em}}{\qed\bigskip}
\newenvironment{proof-of-prop}[1]{\noindent{\bf Proof of Proposition #1}\hspace*{1em}}{\qed\bigskip}
\newenvironment{proof-of-thm}[1]{\noindent{\bf Proof of Theorem #1.}\hspace*{1em}}{\qed\bigskip}
\newenvironment{proof-attempt}{\noindent{\bf Proof Attempt}\hspace*{1em}}{\qed\bigskip}
\title[A construction of a finitely presented semigroup]{A construction of a finitely presented semigroup containing nonnilpotent nil ideal}
\author{Ilya Ivanov-Pogodaev, Sergey Malev}
\address{Moscow Institute of Physics and Technology, Moscow, Russia }
\email{ivanov.pogodaev@gmail.com}
\address{School of mathematics, University of Edinburgh,
Edinburgh, UK; Department of Mathematics, Bar-Ilan University, Ramat Gan Israel}
\email{sergey.malev@ed.ac.uk; malevs@math.biu.ac.il}
\thanks{We would like to thank Agata Smoktunowicz and Alexei Kanel-Belov for interesting and fruitful discussions regarding this
paper}
\thanks{The second named author is partially supported by an Israeli Science Foundation grant number 1623/16}
\thanks{This research was supported by ERC Advanced grant Coimbra 320974.}
\thanks{This research was supported by Young Russian Mathematics award.}
\thanks{This research was supported by RFBR grant 14-01-00548.}
\begin{document}
\maketitle

\renewcommand{\baselinestretch}{1.0}
\renewcommand{\labelenumi}{[\theenumi]}

\begin{abstract}
 This work presents an example of a finitely presented semigroup $S$ containing an infinite nonnilpotent nil ideal $LS$,
 whose elements do not have a square 
(i.e. any word of the type $LXYYZ$ equals zero.)
 \end{abstract}

\section{Introduction}
In noncommutative algebra, ring theory, group and semigroup theory there were constructed various ``monsters'' 
giving counterexamples to some classical questions.
These counterexamples are defined by an infinite number of defining relations, and research effort is usually 
directed towards showing  
that these relations interact poorly and thus consequences (from relations) can be controlled.
Problems related to the construction of finitely presented objects with interesting properties were actively introduced by Latyshev.

A combinatorics of words questions in ring theory are raised in the monograph \cite{BBL}.

Recently several finitely presented objects were constructed.
In particular, a finitely presented semigroup with noninteger Gelfand-Kirillov dimension was constructed (see \cite{BI}).
A construction of finitely presented algebras with finite Gr\"obner basis with 
unsolvable problems of zero divisors and nilpotency was also provided (see \cite{IPM}).
All these results were achieved with realization of Turing Machine analogs with defining relations.

\medskip

A construction of a finitely presented infinite nil semigroup (see \cite{IPKB}) uses a different method:
semigroup elements are considered as a code of paths on a geometric complex, which has a set of special properties, 
in particular ellipticity and aperiodicity.
Using this method, relation is a cell of a complex, and transformation of the word is a changing of the path on complex, 
which saves its beginning, its end and its length.
This construction includes a lot of technical tests of the complex properties, and it increases the volume of the paper.

In the present paper we consider a weaker formulation: does there exist a finitely presented semigroup $S$ with zero, 
which has a letter $L$ in
alphabet, such that the ideal $LS$ is infinite and all the elements of it do not have a square 
(i.e. any word of the type $LXYYZ$ equals zero).

This formulation differs from the general question by existence of the ``fixed point'', i.e. in any analyzed word
we can use that it contains a letter $L$.
It allows us to provide a much simpler construction ($5$ pages instead of $160$).

\medskip

Note that, considering the problem of existence of finitely presented nil ring, we can ask an interim question:

\medskip

\begin{question}
Does theere exist a  finitely presented ring $R$ with zero, such that it has a letter $L$, the ideal $LR$ is infinite, 
and for any 
element $X\in R$ there exists an $n$ such that $LX^n=0$?
\end{question}

\section{Construction}

\begin{theorem}

There exists a finitely presented semigroup $H$, which has the following properties:

\begin{enumerate}

\item[(i)] There exists a nonnilpotent ideal $I=LH$, where $L$ is a letter in $H$;

\item[(ii)] If a word  $A\in H$ can be written as     $A=XYYZ$, where $X,\,Y,\,Z \in H$,
then $LA=0$.

\end{enumerate}
\end{theorem}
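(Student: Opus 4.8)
The plan is to realize $H$ as the semigroup of \emph{paths} on a suitable finite geometric complex, in the spirit of the method of \cite{IPKB}: the generators are oriented edges of the complex (one of which is the distinguished marker $L$), each defining relation identifies two paths bounding a single cell, and the zero element is the class of all \emph{forbidden} paths. Two words then coincide in $H$ exactly when the corresponding paths can be carried into one another by a finite sequence of elementary cell-flips, each of which preserves the initial vertex, the terminal vertex and the length of the path. The decisive design requirement is that the finitely many matching rules of the complex force every admissible patch to reproduce, at every scale, the hierarchical structure of a fixed \emph{squarefree} substitution system; square-freeness of the surviving words is thus an emergent global consequence of a finite list of local constraints, which is what makes a finite presentation possible at all.

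First I would fix an infinite squarefree word $w$ over a finite alphabet containing the marker $L$, chosen so that $L$ occurs in arbitrarily long squarefree factors; such words are abundant as soon as the alphabet has at least three letters. I would then build the complex so that its horizontal sections read off factors of the substitution language of $w$, while the vertical matching rules encode one inflation step of the substitution. The letter $L$ plays the role of a \emph{fixed point}: every path we ever need to analyse begins with the edge $L$, which pins the origin of the patch to a single distinguished vertex and removes the translational ambiguity responsible for most of the length of the general construction. This anchoring is precisely what should collapse the argument from the $160$ pages of \cite{IPKB} to a few.

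To establish property (ii) I would show that a word $A=XYYZ$ containing a square cannot be lifted to an admissible patch anchored at $L$. A repetition $YY$ would have to be compatible with the inflation rules, and by square-freeness of the substitution this forces a mismatch at some level of the hierarchy; the local rules then exhibit the corresponding path as a forbidden one, that is, $LA=0$. To establish property (i) I would produce, for each $n$, a squarefree factor of $w$ containing exactly $n$ occurrences of the marker; since $LL$ is itself a square and therefore never appears, such a factor decomposes as $(Lv_1)(Lv_2)\cdots(Lv_n)$ with each block $v_i$ nonempty, and reading it this way displays a nonzero element of $I^{\,n}=(LH)^n$. Hence $I=LH$ is nonnilpotent and in particular infinite.

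The main obstacle is the \emph{non-collapse} direction: one must prove that the finitely many cell-flip relations do not accidentally annihilate the squarefree words that are supposed to survive, i.e.\ that ``liftable'' is genuinely preserved and faithfully detected by the rewriting. Concretely, I expect to need a confluence or normal-form analysis of the path rewriting, checking that any two admissible representatives of the same element reduce to a common patch and that no chain of flips can turn a liftable path into a forbidden one. Here again the $L$-fixed point is decisive: because every relevant word carries the marker, it suffices to control derivations of patches touching the anchor vertex, keeping the geometric bookkeeping finite and local rather than requiring the full ellipticity machinery of \cite{IPKB}.
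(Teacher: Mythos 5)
Your proposal is a research program, not a proof: at no point do you actually produce a finitely presented semigroup. The complex, its edge generators, its finitely many cells, and the matching rules that are supposed to force the squarefree hierarchy are all left unspecified, and both halves of the theorem are reduced to properties you explicitly defer (``I would build the complex so that\dots'', ``I expect to need a confluence or normal-form analysis\dots''). That deferred non-collapse step is precisely the hard part: proving that finitely many local cell-flips annihilate exactly the square-containing words and nothing else \emph{is} the content of the long construction in \cite{IPKB}, and nothing in your sketch shows why anchoring at $L$ shortens that verification. Note also that the paper does not obtain its simplification by streamlining the geometric method --- it abandons it entirely. The actual construction is an explicit $15$-letter presentation with $14$ relation schemes implementing a comparison machine: $L=MPg$; the letter $g$ annihilates every letter other than $a_1,a_2,a_3$, so nonzero words containing $L$ have the form $LA$ with $A$ over $\{a_1,a_2,a_3\}$ (Proposition \ref{il1}); relation \eqref{i5} launches a ``head'' $Rs_1Q$, and relations \eqref{i6}--\eqref{i14} shuttle the signals $s_1,s_2,t_i$ so that the letters following $P$ are compared one by one with the letters following $Q$; when $A$ contains a square, the successful comparison produces the subword $PRs_1$, which is zero by \eqref{i12} (Propositions \ref{il3}--\ref{il7}). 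The converse direction --- squarefree words survive --- is handled not by confluence of a rewriting system but by five counting invariants $I_0,\dots,I_4$ that pin down the possible shapes of words equivalent to $LA$ (Proposition \ref{il8}), together with a chain analysis showing that starting from a squarefree $LU$ no zero-producing relation can ever fire (Proposition \ref{il9}).

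There is also a concrete problem with your argument for (i). Your decomposition $(Lv_1)(Lv_2)\cdots(Lv_n)$ requires words with \emph{internal} occurrences of $L$ to be nonzero, for otherwise $(LH)^n=0$ trivially; this is an extra design constraint you never verify is compatible with your matching rules. It is in fact violated by the paper's own semigroup: relation \eqref{i1} ($xL=0$ for every letter $x$) forces any product of two elements of $LH$ to vanish, so there ``nonnilpotent'' cannot mean that the powers of the ideal are nonzero --- what the construction actually delivers for (i) is that $LH$ is infinite and consists of arbitrarily long nonzero words (it contains $LU$ for every squarefree $U$ over $a_1,a_2,a_3$, and these are pairwise distinct since every relation preserves the sequence of $a$-letters). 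So your plan aims at a strictly stronger nonnilpotency property than the one the paper realizes, which makes the unproved non-collapse step in your approach harder, not easier.
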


Take an alphabet $\Phi$:

 $$\{ L, \, M, \, P,\,Q,\, R,\, g,\,s_1,\, s_2,\, t_1,\,t_2,\,t_3 ,\,a_1,\,a_2,\,a_3,\,0\}.$$

Consider a semigroup $H$ with a zero element, generated by words in $\Phi$.
Our goal is to construct a finite number of defining relations, generating the required structure on $H$.

Consider the following set of relations:

 \begin{eqnarray}
   &
xL=xM=0,   \quad \text{where $x$ is any letter from $\Phi$}; \label{i1} \\ &
L=MPg,  \quad  \label{i2}\\  &
ga_i=a_ig, \quad i=1\dots 3 \label{i3} \\ &
gx=0,    \quad \text{where $x$ is any letter from $\Phi$ except} \ a_1, a_2, a_3; \label{i4} \\ &
a_iga_j=a_iRs_1Qa_j,    \quad i,j=1\dots 3; \label{i5}\\  &
t_ix=xt_i, \text{where $x\in \{R,\,a_1,\,a_2,\,a_3\}$, }
       i=1\dots 3; \label{i6}\\  &
Pa_it_i=a_iPs_1;           \label{i7} \\ &
Pa_jt_i=a_jPs_2,           \quad i,j=1\dots 3,\ i\ne j;   \label{i8} \\ &
s_ja_i=a_is_j,               \quad i=1\dots 3,\ j=1,2;  \label{i9}  \\ &
s_1R=Rs_1;                   \label{i10} \\ &
s_1Qa_i=t_ia_iQ,           \quad i=1\dots 3; \label{i11} \\ &
PRs_1=0;                    \label{i12} \\  &
s_2Ra_i=a_is_2R,             \quad i=1\dots 3; \label{i13} \\ &
s_2RQa_i=Rt_ia_iQ,         \quad i=1\dots 3. \label{i14}
\end{eqnarray}

\begin{proposition} \label{il1}
Any nonzero word $W\in H$ containing $L$ has a lexicographical form
 $W\equiv LA$, where $A$ is a word which consists of $a_1,\,a_2$ and $a_3$. By that we mean that $A$ belongs 
 to the subsemigroup generated by $a_1,\,a_2$ and $a_3$.
\end{proposition}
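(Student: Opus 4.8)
The plan is to show, purely by combinatorics on words, that a nonzero word containing $L$ is forced letter-by-letter into the shape $LA$ with $A$ a string of $a$'s. Only the relations \eqref{i1}--\eqref{i4} are needed here; the rest of the presentation plays no role in this proposition.

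First I would locate $L$. Let $W\neq 0$ be a word containing the letter $L$. If some letter $x\in\Phi$ occurred immediately to the left of this occurrence of $L$, then $W$ would contain $xL$ as a factor, and \eqref{i1} gives $xL=0$; since $0$ is absorbing in $H$, this would force $W=0$, contrary to hypothesis. Hence $L$ must be the leftmost letter, and we may write $W\equiv LU$ for a (possibly empty) word $U$.

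The heart of the argument is to use \eqref{i2} to convert $L$ into a ``scanning'' letter $g$. Replacing $L$ by $MPg$ yields $W=MPgU$ in $H$. Suppose, for contradiction, that $U$ contains some letter outside $\{a_1,a_2,a_3\}$, and let $x$ be the leftmost such letter, so that $U\equiv A'\,x\,U''$ with $A'$ a (possibly empty) product of $a_1,a_2,a_3$. By \eqref{i3} the letter $g$ commutes to the right past each letter of $A'$, giving
$$MPgA'xU''=MPA'gxU''.$$
Because $x\notin\{a_1,a_2,a_3\}$, relation \eqref{i4} gives $gx=0$, so $W=0$ again by absorption---a contradiction. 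Therefore every letter of $U$ lies in $\{a_1,a_2,a_3\}$, i.e. $U\equiv A$ belongs to the subsemigroup generated by $a_1,a_2,a_3$, and $W\equiv LA$ as required. The degenerate cases are covered automatically: if $U$ is empty then $W\equiv L$, and if $A'$ is empty then $g$ meets $x$ directly.

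The step I expect to be most delicate is not any single computation but keeping the logic sound in a two-sided rewriting system. Since the relations may be applied in both directions, a word has many representatives and there is no obvious canonical form to follow. What rescues the argument is that we only ever need to establish that certain words equal $0$, and for that it suffices to exhibit one chain of relation applications reaching $0$---the ``easy'' direction. The genuine idea is thus the choice to rewrite $L$ via \eqref{i2}: this injects a $g$ that glides past $a$'s (by \eqref{i3}) but is destroyed on contact with anything else (by \eqref{i4}), so a single foreign letter after $L$ annihilates the whole word. I would double-check only that \eqref{i4} excludes exactly $a_1,a_2,a_3$ and no other letter, since this is precisely what singles the $a$'s out as the unique survivors.
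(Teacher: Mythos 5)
Your proof is correct and is essentially the paper's own argument: use \eqref{i1} to force $L$ to be the leftmost letter, rewrite $L$ as $MPg$ via \eqref{i2}, then let $g$ slide right past $a$'s by \eqref{i3} until it either annihilates a foreign letter by \eqref{i4} (forcing $W=0$) or confirms that only $a_1,a_2,a_3$ follow. Your version merely makes explicit the ``leftmost foreign letter'' bookkeeping that the paper's terse ``Hence'' leaves implicit.
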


\begin{proof}
Let $W$ contain $L$. As we know, $xL=0$, for any letter $x$ (see relation \eqref{i1}), 
therefore $L$ can only be the first letter in the word $W$.
Assume $W=LU$. We use $L=MPg$ (see relation \eqref{i2}), thus $LU\equiv MPgU$.
Note that $ga_i=a_ig$ and $gx=0$ for $x\ne a_1,a_2,a_3$ (relations \eqref{i3} and \eqref{i4}).
Hence, if there is any letter except $a_1,\,a_2$ and $a_3$ in the word $U$, then $W$ simplifies to zero.
\end{proof}

\begin{proposition} \label{il2}
Let $X$ and $Y$ be any nonzero words. Then the word $LXY$ simplifies to either zero,
or the form $MPXRs_1QY$.
\end{proposition}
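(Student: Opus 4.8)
The plan is to split off the trivial case and then reduce everything to a short, forced chain of rewrites. First I would observe that if $LXY = 0$ there is nothing to prove, so I assume $LXY \ne 0$. Since $LXY$ is then a nonzero word containing $L$, Proposition~\ref{il1} applies with $W = LXY$, and it forces the tail $XY$ to lie in the subsemigroup generated by $a_1, a_2, a_3$. In particular every letter occurring in $X$ or in $Y$ must be one of $a_1, a_2, a_3$; and because $X$ and $Y$ are nonzero words, both are nonempty. Thus I may write $X \equiv a_{i_1}\cdots a_{i_m}$ and $Y \equiv a_{j_1}\cdots a_{j_n}$ with $m, n \ge 1$ and all indices in $\{1,2,3\}$.

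Next I would expand $L$ and slide $g$ across $X$. Relation~\eqref{i2} gives $LXY \equiv MPgXY$. Then, since each letter of $X$ is some $a_i$, relation~\eqref{i3} lets me commute $g$ past those letters one at a time, producing $MPgXY \equiv MPXgY$. The point to emphasise here is that no letter is destroyed during this migration: the annihilating relation~\eqref{i4} is never triggered precisely because every letter of $X$ lies in $\{a_1,a_2,a_3\}$, which is exactly the regime in which \eqref{i3} (rather than \eqref{i4}) governs the interaction with $g$.

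Finally I would apply relation~\eqref{i5} at the interface between $X$ and $Y$. Because $X$ ends in $a_{i_m}$ and $Y$ begins with $a_{j_1}$, the word $MPXgY$ contains the factor $a_{i_m}\,g\,a_{j_1}$, and \eqref{i5} rewrites this as $a_{i_m}Rs_1Qa_{j_1}$. Reassembling the remaining letters of $X$ on the left and of $Y$ on the right yields $MPXRs_1QY$, which is the claimed form.

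I expect the only delicate point to be the bookkeeping in the middle paragraph, namely making sure that pushing $g$ rightward through $X$ never lands $g$ next to a forbidden letter (which by \eqref{i4} would collapse the word to $0$) and that after the commutation $g$ really sits strictly between two $a$-letters, so that \eqref{i5} can be legitimately invoked. Both facts are guaranteed by combining Proposition~\ref{il1} (all of $XY$ consists of $a$-letters) with the hypothesis that $X$ and $Y$ are nonzero, hence nonempty; once these are noted, the argument closes immediately.
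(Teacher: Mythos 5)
Your proof is correct and follows essentially the same route as the paper's: reduce to the case where $X$ and $Y$ consist of $a$-letters via Proposition~\ref{il1}, expand $L=MPg$ by relation~\eqref{i2}, slide $g$ across $X$ with relation~\eqref{i3}, and apply relation~\eqref{i5} at the $X$--$Y$ interface to produce $MPXRs_1QY$. Your added care about $X$ and $Y$ being nonempty (so that \eqref{i5} genuinely applies between the last letter of $X$ and the first of $Y$) is a point the paper leaves implicit, but it is not a different argument.
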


\begin{proof}
 If $X$ or $Y$ contains any letter except $a_i$, then according to
Proposition \ref{il1} a word $LXY$ equals zero. Let $X$ and $Y$ consist of letters $a_1,\,a_2,\,a_3$.
  Using $L=MPg$ (relation \eqref{i2}) we have $LXY\equiv MPgXY$.
Then using \eqref{i3} and \eqref{i5} we obtain $MPgXY\equiv MPXgY\equiv
MPXRs_1QY$.
\end{proof}

\begin{proposition} \label{il3}
Let $U$ be a word which consists of letters $a_1,\,a_2,\,a_3$.
Then $Pa_iURt_i\equiv a_iPURs_1$.

Moreover, if $i\ne j$ then $Pa_jURt_i\equiv a_jPUs_2R$.
\end{proposition}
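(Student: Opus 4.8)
The plan is to prove the first identity by transporting the trailing letter $t_i$ leftward across the word until it is adjacent to the block $Pa_i$, at which point the decisive relation \eqref{i7} can be applied; afterwards I would clean up the resulting $s_1$ by sliding it back to the right. Concretely, starting from $Pa_iURt_i$, I would first use relation \eqref{i6}, which lets $t_i$ commute past $R$, to rewrite the word as $Pa_iUt_iR$. Since $U$ is by hypothesis a word in $a_1,a_2,a_3$, relation \eqref{i6} also lets $t_i$ commute past every letter of $U$, giving $Pa_it_iUR$. Now \eqref{i7} replaces $Pa_it_i$ by $a_iPs_1$, yielding $a_iPs_1UR$. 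To finish, I would use \eqref{i9} (with $j=1$) to commute $s_1$ rightward through $U$, obtaining $a_iPUs_1R$, and then \eqref{i10} to swap $s_1R$ into $Rs_1$, arriving at $a_iPURs_1$. This chain of equivalences establishes $Pa_iURt_i\equiv a_iPURs_1$.

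For the second identity, with $i\ne j$, I would follow the identical opening moves: by \eqref{i6} the letter $t_i$ commutes past $R$ and past all of $U$, turning $Pa_jURt_i$ into $Pa_jt_iUR$. The difference is that now the subscripts disagree, so the relevant relation is \eqref{i8} rather than \eqref{i7}; applying \eqref{i8} rewrites $Pa_jt_i$ as $a_jPs_2$, producing $a_jPs_2UR$. Finally \eqref{i9} (with $j=2$) commutes $s_2$ to the right through $U$, giving $a_jPUs_2R$, which is the claimed form. Note that here the block $s_2R$ is left intact, precisely because there is no analogue of \eqref{i10} for $s_2$ (relation \eqref{i10} swaps only $s_1$ with $R$), which is why the two cases produce genuinely different normal forms.

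The computation is essentially mechanical, so the only point requiring care — and the closest thing to an obstacle — is justifying that every commutation step is actually legal. Each application of \eqref{i6} and \eqref{i9} moves $t_i$ or $s_1,s_2$ past an $a$-letter, and this is permitted only because $U$ lies in the subsemigroup generated by $a_1,a_2,a_3$; the hypothesis on $U$ is therefore used at every transposition, not merely once. I would make explicit that both \eqref{i6} and \eqref{i9} commute the relevant letter with each of $a_1,a_2,a_3$ individually, so that the passage through the whole word $U$ is a valid iteration regardless of its length or letter content. Keeping the order of the surviving letters $s_1$ versus $s_2$ relative to $R$ straight is the final bookkeeping detail that distinguishes the two asserted forms.
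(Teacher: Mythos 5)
Your proof is correct and follows essentially the same route as the paper's own argument: commute $t_i$ leftward through $R$ and $U$ via \eqref{i6}, apply \eqref{i7} (resp.\ \eqref{i8}) to convert $Pa_it_i$ into $a_iPs_1$ (resp.\ $Pa_jt_i$ into $a_jPs_2$), then slide the resulting $s_1$ or $s_2$ rightward using \eqref{i9}, with \eqref{i10} supplying the final swap $s_1R=Rs_1$ in the first case only. Your explicit remark that no analogue of \eqref{i10} exists for $s_2$, which is why the two normal forms differ, is a correct and useful observation that the paper leaves implicit.
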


\begin{proof}
According to relation \eqref{i6},  $Pa_iURt_i\equiv Pa_it_iUR$. Applying relation \eqref{i7},
we obtain $Pa_it_iUR \equiv a_iPs_1UR$. Therefore, according to \eqref{i9} and \eqref{i10},
we have $a_iPs_1UR\equiv a_iPURs_1$.

Assume $i\ne j$. According to relation \eqref{i6}, $Pa_jURt_i\equiv Pa_jt_iUR$. 
Hence, $Pa_jt_iUR \equiv a_jPs_2UR$ follows from \eqref{i8}. Therefore, according to  \eqref{i9},
we have $a_jPs_2UR\equiv a_jPUs_2R$.
\end{proof}

\begin{proposition} \label{il4}
Let $V$ be a word which consists of letters $a_1,\,a_2,\,a_3$.
Then $s_1VQa_i\equiv t_iVa_iQ$ and $s_2RVQa_i\equiv t_iVRa_iQ$.
\end{proposition}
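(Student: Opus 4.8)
The plan is to treat both identities by the same three-move pattern: push the leading symbol (either $s_1$ or the block $s_2R$) rightward through the $a$-word $V$ by repeated commutation, apply the one genuinely nontrivial relation at the $Qa_i$ junction, and finally slide the newly created $t_i$ back to the front. Since $V$ is by hypothesis a word in $a_1,\,a_2,\,a_3$ only, every commutation I invoke is licensed by a relation already listed, applied letter by letter.

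For the first identity I write $V=a_{j_1}\cdots a_{j_k}$. Relation \eqref{i9} gives $s_1a_i\equiv a_is_1$, so applying it to each letter moves $s_1$ to the right of $V$: $s_1VQa_i\equiv Vs_1Qa_i$. The single essential step is relation \eqref{i11}, $s_1Qa_i\equiv t_ia_iQ$, which yields $Vs_1Qa_i\equiv Vt_ia_iQ$. Finally relation \eqref{i6} says $t_i$ commutes with each of $a_1,\,a_2,\,a_3$, hence with every letter of $V$, so $Vt_i\equiv t_iV$ and we reach $t_iVa_iQ$, as required.

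The second identity follows the same route with the $s_2R$-versions of the relations. Relation \eqref{i13}, $s_2Ra_i\equiv a_is_2R$, moves the whole block $s_2R$ rightward through $V$, giving $s_2RVQa_i\equiv Vs_2RQa_i$. The nontrivial step is now relation \eqref{i14}, $s_2RQa_i\equiv Rt_ia_iQ$, producing $Vs_2RQa_i\equiv VRt_ia_iQ$. To finish I bring $t_i$ to the front: by \eqref{i6} it commutes with $R$, so $VRt_i\equiv Vt_iR$, and it commutes with every letter of $V$, so $Vt_iR\equiv t_iVR$; hence $VRt_ia_iQ\equiv t_iVRa_iQ$.

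There is essentially no obstacle here: each computation is a linear chain of substitutions. The only point requiring care is that the commutation relations \eqref{i9}, \eqref{i13}, and \eqref{i6} are available for precisely the letters $a_1,\,a_2,\,a_3$ (and, in \eqref{i6} and \eqref{i13}, for $R$), which is exactly the hypothesis imposed on $V$, so no relation is ever applied outside its stated range of validity.
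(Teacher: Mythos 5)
Your proof is correct and takes essentially the same route as the paper's: commute the leading block through $V$ (via \eqref{i9}, resp.\ \eqref{i13}), apply the junction relation \eqref{i11}, resp.\ \eqref{i14}, and pull $t_i$ to the front with \eqref{i6}. The only difference is cosmetic: the paper additionally cites \eqref{i10} in the first step, which is superfluous since $V$ contains no $R$ --- your version cites exactly the relations needed.
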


\begin{proof}
According to relations \eqref{i9} and \eqref{i10}, $s_1VQa_i\equiv Vs_1Qa_i$.
Let us apply relation \eqref{i11},
thus  $Vs_1Qa_i \equiv Vt_ia_iQ$. According to \eqref{i6},
we have $Vt_ia_iQ\equiv t_iVa_iQ$.

Let us apply \eqref{i13}, as a consequence we have $s_2RVQa_i\equiv Vs_2RQa_i$.
According to \eqref{i14},
we have $Vs_2RQa_i \equiv VRt_ia_iQ$. Using \eqref{i6},
we have $VRt_ia_iQ\equiv t_iVRa_iQ$.
\end{proof}

\begin{proposition} \label{il5}
Let $X,\,V \,Z$ be words consist of letters $a_1,\,a_2,\,a_3$.
Then $PXVRZs_1QX\equiv XPVRs_1ZXQ$ and $PXs_2VRQX\equiv XPVRs_1XQ\equiv 0$.
\end{proposition}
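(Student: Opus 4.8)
The plan is to prove both equivalences by induction on the length of $X$, peeling off the leading letter of $X$ at each step and processing the two ends of the word — the segment just after $P$ and the segment just after $Q$ — in tandem. The whole argument rests on one structural observation: because the \emph{same} word $X$ occurs at both ends, the index manufactured at the $Q$-end always matches the leading letter waiting at the $P$-end, so Proposition~\ref{il3} fires through its first branch (the one producing $s_1$) and never through its second branch (the one producing $s_2$). Keeping the reduction ``balanced'' in this way is the point I expect to demand the most care.

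For the first equivalence, write $X=a_iX'$. In $PXVRZs_1QX=Pa_iX'VRZs_1Qa_iX'$ I would first apply \eqref{i11} to the terminal block $s_1Qa_i$, turning it into $t_ia_iQ$, and then slide the new $t_i$ leftwards past $Z$ using \eqref{i6}, reaching $Pa_iX'VRt_iZa_iQX'$. The prefix $Pa_i(X'V)Rt_i$ now matches the left-hand side of Proposition~\ref{il3} with $U=X'V$, so it rewrites to $a_iP(X'V)Rs_1$, giving $a_iPX'VRs_1Za_iQX'$. Finally I slide $s_1$ rightwards past $Za_i$ by \eqref{i9}, exposing the subword $PX'VR(Za_i)s_1QX'$ — an instance of the same statement with $X$ replaced by the shorter word $X'$ and $Z$ replaced by $Za_i$. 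The induction hypothesis rewrites it to $X'PVRs_1(Za_i)X'Q$, and restoring the stripped $a_i$ produces exactly $XPVRs_1ZXQ$. The base case $X=a_i$ is this computation with $X'$ empty.

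The first equality of the second statement is handled in the same spirit, now feeding into the equivalence just proved. Writing $X=a_iX'$ in $PXs_2VRQX$, I would slide $s_2$ rightwards past $V$ with \eqref{i9}, apply \eqref{i14} to the block $s_2RQa_i$ to get $Rt_ia_iQ$, and then invoke Proposition~\ref{il3} (again its first branch, by the index-matching observation) to move $a_i$ out past $P$ while converting $t_i$ into $s_1$. Sliding $s_1$ past $a_i$ with \eqref{i9} leaves $a_i(PX'VRa_is_1QX')$, whose bracketed factor is an instance of the first equivalence with $Z=a_i$; applying that already-established equivalence turns the whole word into $XPVRs_1XQ$.

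It then remains to collapse $XPVRs_1XQ$ to zero, and this is exactly where the square is detected. I would push the reduction to the stage at which the symbols $P$, $R$, $s_1$ become consecutive; relation~\eqref{i12}, $PRs_1=0$, then annihilates the word. The crux — and the step I expect to be the main obstacle — is the bookkeeping guaranteeing that this configuration is reached: one must verify that throughout the reduction the indices keep matching, so that Proposition~\ref{il3} stays in its $s_1$-branch and no stray $s_2$ survives to obstruct the final appearance of $PRs_1$. This matching is precisely what the hypothesis that the two outer occurrences of $X$ are the \emph{identical} word supplies, and checking it carefully is the heart of the proof.
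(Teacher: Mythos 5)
Your treatment of the two equivalences is correct and is essentially the paper's own proof: induction on the length of $X$, peeling off the leading letter $a_i$, applying \eqref{i11} (resp.\ sliding $s_2$ and applying \eqref{i14}) at the $Q$-end, commuting $t_i$ leftwards by \eqref{i6}, firing the $s_1$-branch of Proposition~\ref{il3}, and closing with the induction hypothesis applied to $Za_i$; your reduction of the second equivalence to the already-proved first one is what the paper compresses into ``proved similarly''.

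The gap is the terminal claim $XPVRs_1XQ\equiv 0$. You explicitly defer it (``the crux\dots the main obstacle''), so nothing is actually proved there; worse, the plan you sketch --- drive the word to a configuration in which $P$, $R$, $s_1$ are adjacent and invoke \eqref{i12} --- cannot succeed once $V$ is nonempty. To form the subword $PRs_1$, every letter of $V$ must leave the segment between $P$ and $R$. An $a$-letter can cross $P$ only via \eqref{i7} or \eqref{i8}, which require a $t$-letter immediately to its right; and the index-matching you invoke guarantees that each $t_i$ generated at the $Q$-end arrives at $P$ exactly when the letter standing there is the current leading letter of the rotated copy of $X$, never a letter of $V$. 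So \eqref{i7} cycles letters of $X$ around indefinitely while the $P$--$R$ gap never drops below the length of $V$. Crossing $R$ to the right would require an $s_2$ adjacent to $R$ (relation \eqref{i13}), and matching indices mean \eqref{i8} never fires, so no $s_2$ is ever created. In short, index-matching is not what makes the collapse possible; it is precisely what forbids it.

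Indeed, the zero claim is false for nonempty $V$ by the paper's own results: Proposition~\ref{il2} gives $LXVX\equiv MPXVRs_1QX$, which by your first equivalence (with $Z$ empty) is $M\,(XPVRs_1XQ)$; but Proposition~\ref{il9} makes $LXVX$ nonzero whenever $XVX$ is square-free (take $X=a_1$, $V=a_2$), so $XPVRs_1XQ\not\equiv 0$ there. To be fair, you are in good company: the paper's own proof is silent on the ``$\equiv 0$'' part, and the proposition is only ever used (in Proposition~\ref{il7}) with $V$ and $Z$ empty, where $PRs_1$ is literally a subword of $XPRs_1XQ$ and \eqref{i12} finishes in one step with no bookkeeping at all --- which is the only case in which the zero claim is needed, or true.
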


\begin{proof}
Let us prove the first equivalence by induction on the length of $X$. 
For $X=a_i$ it follows from Proposition \ref{il3} and Proposition \ref{il4}. 
Let $X=a_iU$, i.e. $a_i$ is the first letter of $X$.
According to Proposition \ref{il4}, $PXVRZs_1QX=PXVRZ\underline{s_1Qa_i}U\equiv PXVRZ\underline{t_ia_iQ}U$.
Using \eqref{i6}, we have $PXVR\underline{Zt_i}a_iQU\equiv PXVR\underline{t_iZ}a_iQU$.
Applying Proposition \ref{il3}, we have
$PXVRt_iZa_iQU= \underline{Pa_iUVRt_i}Za_iQU\equiv \underline{a_iPUVRs_1}Za_iQU\equiv
a_iPUVRZa_is_1QU$. Thus we can consider
a word $PUVRZa_is_1QU$, and the induction hypothesis is true for it.

The second equivalence can be proved similarly, but on the first stage we use the second part of Proposition \ref{il4}.
\end{proof}

\begin{proposition} \label{il6}
Let $i,\, j, \, k \in \{1\dots 3\}$ and $i\ne j$. Then $Pga_ia_ja_k=a_iPga_ja_k$.
\end{proposition}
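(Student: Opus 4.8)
The plan is to prove the identity not by transforming the left-hand side into the right-hand side directly, but by reducing \emph{both} words to one and the same normal form. Before doing so I would point out the trap: using only the commutation \eqref{i3} to slide $g$ to the far right on each side yields $Pa_ia_ja_kg$ on the left and $a_iPa_ja_kg$ on the right, and these two words differ exactly by the relation $Pa_i=a_iP$, which is \emph{not} among the defining relations. So the letter $g$ cannot merely be pushed aside; it must be consumed by the expansion relation \eqref{i5}, $a_iga_j=a_iRs_1Qa_j$. The whole mechanism rests on the observation that this expansion manufactures precisely the letters $R$, $s_1$, $Q$ that are needed to let $P$ hop across $a_i$ through Proposition~\ref{il3}.

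On the left-hand side I would first apply \eqref{i3} to move $g$ one step to the right, creating the pattern $a_iga_j$, then apply \eqref{i5} to obtain $Pa_iRs_1Qa_ja_k$, and then use \eqref{i11} to rewrite $s_1Qa_j$ as $t_ja_jQ$, reaching $Pa_iRt_ja_jQa_k$. The decisive step comes next: since $i\ne j$, I apply the second part of Proposition~\ref{il3} (with the intermediate $a$-word taken empty) to the subword $Pa_iRt_j$, turning it into $a_iPs_2R$. This is the only genuinely nontrivial rewriting and the one place the hypothesis $i\ne j$ enters, so it is the step I expect to be the crux. Once $P$ has jumped past $a_i$ we are left with $a_iPs_2Ra_jQa_k$; sliding $s_2R$ rightward past $a_j$ by \eqref{i13} and contracting $s_2RQa_k$ by \eqref{i14} yields $a_iPa_jRt_ka_kQ$.

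On the right-hand side $a_iPga_ja_k$ the letter $P$ already sits to the left of $a_i$, so no hop is required. Here I would slide $g$ one step right by \eqref{i3} to form the pattern $a_jga_k$, apply \eqref{i5} to get $a_iPa_jRs_1Qa_k$, and finally apply \eqref{i11} to $s_1Qa_k$ to arrive at the very same word $a_iPa_jRt_ka_kQ$. Comparing the two computations gives the asserted equality.

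The main obstacle, then, is conceptual rather than computational: one must recognize that the naive normalization fails and that \eqref{i5} has to be invoked to create the auxiliary letters, after which the single substantive move is the $P$-hop provided by Proposition~\ref{il3}; the remaining manipulations with \eqref{i3}, \eqref{i11}, \eqref{i13} and \eqref{i14} are routine bookkeeping.
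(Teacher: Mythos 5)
Your proof is correct and follows essentially the same route as the paper: the chain $Pga_ia_ja_k \equiv Pa_iRs_1Qa_ja_k \equiv Pa_iRt_ja_jQa_k \equiv a_iPs_2Ra_jQa_k \equiv a_iPa_js_2RQa_k \equiv a_iPa_jRt_ka_kQ$ is exactly the paper's, with your appeal to Proposition~\ref{il3} (empty $U$) being precisely the paper's direct use of \eqref{i6} and \eqref{i8}. The only cosmetic difference is that you reduce both sides to the common word $a_iPa_jRt_ka_kQ$, whereas the paper runs your right-hand-side reduction in reverse (via \eqref{i5}, \eqref{i11} and \eqref{i3}) to land on $a_iPga_ja_k$ directly.
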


\begin{proof}
Using relations \eqref{i3} and \eqref{i5}, we have $Pga_ia_ja_k\equiv Pa_iRs_1Qa_ja_k$.
The relation \eqref{i11} gives us $Pa_iRs_1Qa_ja_k\equiv Pa_iRt_ja_jQa_k$.
Then, according to \eqref{i6} and \eqref{i8},
$\underline{Pa_iRt_j}a_jQa_k\equiv \underline{Pa_it_j}Ra_jQa_k \equiv
\underline{a_iPs_2}Ra_jQa_k$. Applying \eqref{i13}, we have $a_iP\underline{s_2Ra_j}Qa_k
\equiv a_iP\underline{a_js_2R}Qa_k$ and by \eqref{i14},
$a_iPa_j\underline{s_2RQa_k}\equiv a_iPa_j\underline{Rt_ka_kQ}$.
Now, according to \eqref{i5}, we have
$a_iP\underline{a_jRt_ka_kQ}\equiv a_iP\underline{a_jga_k}$. Therefore, according to $ga_j=a_jg$, $Pga_ia_ja_k=a_iPga_ja_k$.
\end{proof}

\begin{proposition} \label{il7}
Let $X,\,Y,\,Z$ be nonempty words. Then  $LXYYZ\equiv 0$.
\end{proposition}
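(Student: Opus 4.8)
The plan is to reduce $LXYYZ$ to a word containing the adjacent pair $PRs_1$, which vanishes by relation~\eqref{i12}. First I would dispose of the trivial case: by Proposition~\ref{il1}, if $LXYYZ\neq 0$ then $XYYZ$ must lie in the subsemigroup generated by $a_1,a_2,a_3$, so I may assume $X,Y,Z$ are words in the letters $a_i$ alone (otherwise the claim is immediate). Since $L=MPg$ by \eqref{i2}, it suffices to prove the sharper statement that $Pg(XYYZ)\equiv 0$ for every nonempty $a$-word $Y$ and arbitrary (possibly empty) $a$-words $X,Z$; the factor $M$ then simply rides along and $LXYYZ=M\cdot Pg(XYYZ)=0$.

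I would prove $Pg(XYYZ)\equiv 0$ by induction on the length of the prefix $X$. In the base case $X$ is empty. Here I would slide $g$ rightward across the whole first copy of $Y$ using \eqref{i3}, then convert it with \eqref{i5} at the seam between the two copies, reaching $PYRs_1QYZ=(PYRs_1QY)Z$. Now the two occurrences of the identical block $Y$ sit in exactly the configuration handled by the first assertion of Proposition~\ref{il5}, with the repeated block equal to $Y$ and both interior words empty: $PYRs_1QY\equiv YPRs_1YQ$. Appending $Z$ and invoking $PRs_1=0$ from \eqref{i12} collapses the word to zero.

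For the inductive step write $X=a_iX'$ and look at the second letter $a_j$ of $XYYZ$. If $j\neq i$, Proposition~\ref{il6} advances the head, $Pg(a_iX'YYZ)\equiv a_i\,Pg(X'YYZ)$, and the induction hypothesis (the prefix $X'$ is shorter while the square $YY$ is untouched) finishes the case. If instead $j=i$, then $XYYZ$ begins with the short square $a_ia_i$; converting $g$ after the first $a_i$ gives $Pa_iRs_1Qa_i(\cdots)$, and Proposition~\ref{il5} with repeated block $a_i$ and empty interior words, together with \eqref{i12}, again produces $PRs_1$ and hence zero.

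The one point demanding care---the main obstacle---is aligning the two copies of $Y$ so that Proposition~\ref{il5} can be applied: that lemma compares only the block immediately following $P$ with the block immediately following $Q$, whereas in $LXYYZ$ the first copy of $Y$ is shielded by the prefix $X$. Resolving this is precisely what the induction on the length of $X$ accomplishes, peeling $X$ off one letter at a time with Proposition~\ref{il6}; the case $j=i$ must be separated out because Proposition~\ref{il6} requires distinct adjacent letters, but fortunately that case produces a square immediately and is even easier. I would finally double-check that every relation used (\eqref{i3}, \eqref{i5}, \eqref{i12}, and Propositions~\ref{il5} and~\ref{il6}) is applied to a genuine subword, so that the congruence is compatible with the surrounding letters, and that the length bound $|XYYZ|\ge 3$ in the inductive step always supplies the three-letter pattern needed by Proposition~\ref{il6}.
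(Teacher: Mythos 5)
Your proof is correct and follows essentially the same route as the paper's: reduce to $a$\dash words via Proposition~\ref{il1}, rewrite $L$ as $MPg$ by \eqref{i2}, peel the prefix $X$ off the head with Proposition~\ref{il6}, convert $PgYY$ into $PYRs_1QY$, and kill the result with Proposition~\ref{il5} (empty interior words) and relation~\eqref{i12}. The one place you diverge is worth noting, and it is in your favor: the paper simply says ``applying Proposition~\ref{il6} the required number of times'' to obtain $MPgXYYZ\equiv MXPgYYZ$, which taken literally requires every pair of adjacent letters within $X$, as well as the pair formed by the last letter of $X$ and the first letter of $Y$, to be distinct, since Proposition~\ref{il6} assumes $i\ne j$. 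Your induction on the length of $X$, with the explicit subcase $j=i$ --- where the word has the square $a_ia_i$ right at the head, and Proposition~\ref{il5} applied with repeated block $a_i$ plus \eqref{i12} finishes immediately --- handles exactly the configuration that the paper's wording glosses over. So your write-up is not merely correct but slightly more careful than the original; the length count $|XYYZ|\ge 3$ you check at the end is also the right condition to guarantee the three-letter pattern Proposition~\ref{il6} needs.
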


\begin{proof}
According to Proposition \ref{il1}, we can assume, $X,\,Y,\,Z$ are words consist of letters $a_1,\,a_2,\,a_3$.

The relation \eqref{i1} gives us $LXYYZ\equiv MPgXYYZ$.
Applying Proposition \ref{il6} the required number of times, we obtain $MPgXYYZ\equiv MXPgYYZ$.

Now consider a subword $PgYY$. The relation Proposition \ref{il2} gives us $PgYY\equiv PYRs_1QY$.
Applying Proposition \ref{il5} (with empty words $V$ and $Z$) and \eqref{i12},
we obtain $PYRs_1QY\equiv YPRs_1YQ \equiv 0$.

\end{proof}

\begin{proposition} \label{il8}
Let a nonzero word $W$ contain letter $L$. Then, for a word $U$ equivalent to $W$,
there are three options:

\begin{enumerate}

\item[(i)] A word $U$ is of type $LA$, where $A$ is a word which consists of $a_1$, $a_2$, $a_3$;

\item[(ii)] A word $U$ is of type $MA_1PA_2gA_3$, where $A_1$, $A_2$, $A_3$ are
words that consist of $a_1$, $a_2$, $a_3$;

\item[(iii)] A word $U$ does not contain letters $L$ and $g$, however $U$ contains one letter (the first one) $M$,
one letter $P$, one $R$, one $Q$ and one letter from the set
$\{ t_1$, $t_2$, $t_3$, $s_1$, $s_2\}$. Moreover, letters $P$, $Q$, $R$ appear in this order.

\end{enumerate}
\end{proposition}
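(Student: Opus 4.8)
The plan is to treat the three admissible shapes as a single combinatorial invariant and to show that this invariant is preserved by every defining relation; the classification then follows from a chain argument. Concretely, let $G$ be the set of all words of one of the shapes (i), (ii), (iii). By Proposition \ref{il1} the word $W$ is equivalent to a word $LA$ of shape (i), so $U\sim W\sim LA$, and there is a finite chain of elementary moves $LA=U_0\leftrightarrow U_1\leftrightarrow\cdots\leftrightarrow U_n=U$, each step being a single application of one of \eqref{i1}--\eqref{i14} in either direction. The core claim to establish is: \emph{if $U'\in G$ and $U''$ is obtained from $U'$ by one elementary move, then either $U''=0$ or $U''\in G$.} Granting this, every $U_k$ is equivalent to the nonzero element $W$, hence no $U_k$ is $0$; so starting from $U_0=LA\in G$ an induction on $k$ keeps each $U_k$ in $G$, and in particular $U=U_n\in G$.

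First I would record which relations can act on each shape. On a word $LA$ of shape (i) the only letters are $L,a_1,a_2,a_3$, so the sole applicable relation is \eqref{i2}, which rewrites $LA\equiv MPgA$, a word of shape (ii). On a word $MA_1PA_2gA_3$ of shape (ii) the available letters are $M,P,g,a_1,a_2,a_3$: relation \eqref{i2} used backwards (possible exactly when $A_1=A_2=\varnothing$, so that $MPg$ is a subword) returns a shape (i) word; relation \eqref{i3} merely slides $g$ through the $a_i$-block and keeps the word in shape (ii); and \eqref{i5} used forwards on a subword $a_iga_j$ replaces $g$ by $Rs_1Q$ and lands in shape (iii). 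No other relation matches, and because in shape (ii) the letter $g$ is always followed either by some $a_k$ or by nothing, relation \eqref{i4} can never fire to produce $0$.

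The heart of the argument is shape (iii). For such a word I would verify that each of \eqref{i6}--\eqref{i14}, together with the backward use of \eqref{i5}, preserves the conjunction of the invariants: there is no $L$ and no $g$; the first letter is $M$; there is exactly one $P$, one $R$ and one $Q$; there is exactly one letter from $\{t_1,t_2,t_3,s_1,s_2\}$; and $P,R,Q$ occur in this left-to-right order, as they do in the canonical word $MPXRs_1QY$ of Proposition \ref{il2}. The commuting relations \eqref{i6}, \eqref{i9}, \eqref{i10}, \eqref{i13} only transpose adjacent letters and so preserve the relative order of $P,R,Q$; the relations \eqref{i7}, \eqref{i8}, \eqref{i11}, \eqref{i14} each convert the single distinguished letter via $s_j\leftrightarrow t_i$ and shift $P$ or $Q$ past one $a_k$, but they neither create a second distinguished letter nor move $P,R,Q$ out of order; relation \eqref{i12} sends a subword $PRs_1$ to $0$; and the backward use of \eqref{i5} on a subword $a_iRs_1Qa_j$ deletes $R,s_1,Q$ and reinserts $g$, which is exactly a return to shape (ii). Since the only moves that could introduce $L$ or a new $g$ (namely \eqref{i2} and the forward use of \eqref{i5}) require a letter absent from shape (iii), no transition can leave $G$ except into $0$.

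The main obstacle I anticipate is the bookkeeping in the shape (iii) case: one must check, relation by relation and in both directions, that the counts of $P$, $R$, $Q$ each stay at one, that exactly one distinguished letter survives every rewrite, and---most delicately---that the order $P\cdots R\cdots Q$ is never violated, with particular attention to \eqref{i14}, where in a single step $R$ is carried left past $s_2$ while $Q$ is carried right past $a_i$. Once these local verifications are assembled, the closure of $G$ under elementary moves is immediate and the three-way classification follows from the chain argument of the first paragraph.
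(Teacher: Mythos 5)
Your proposal is correct, but it is organized quite differently from the paper's proof, so a comparison is worthwhile. The paper's key device is a family of five conserved letter-counts: $I_0$ (number of occurrences of $L$, $M$), $I_1$ ($L$, $P$), $I_2$ ($L$, $g$, $R$), $I_3$ ($L$, $g$, $Q$), $I_4$ ($L$, $g$, $t_1$, $t_2$, $t_3$, $s_1$, $s_2$). Each of these is equal on the two sides of every defining relation, so every nonzero word equivalent to $LA$ has all five counts equal to $1$; the three shapes are then read off by a case analysis (contains $L$; contains $g$ but no $L$; contains neither), and the positional facts --- $M$ first, $P$, $R$, $Q$ in order --- are disposed of by short remarks such as ``there is no relation which can change this order.'' You instead prove that the set $G$ of the three shapes is closed under a single application of any relation in either direction (allowing transitions to $0$) and induct along the rewriting chain from $LA$ to $U$, using that no intermediate word can be $0$ since all are equivalent to $W\neq 0$. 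What the paper's counting invariants buy is that all the multiplicity statements follow at once, uniformly over the relations, with no case analysis; what your closure argument buys is rigor exactly where the paper is thinnest: your relation-by-relation verification for shape (iii) --- in particular for \eqref{i7}, \eqref{i8}, \eqref{i11}, \eqref{i13}, \eqref{i14}, the only moves that actually displace $P$, $R$ or $Q$, and for the backward use of \eqref{i5}, which is the only exit from shape (iii) back to shape (ii) --- is precisely the content behind the paper's one-line order claim, and your explicit treatment of the zero relations is cleaner than the paper's implicit assumption that they never occur in a chain between nonzero words. Two small points: the statement's phrase ``letters $P$, $Q$, $R$ appear in this order'' is evidently a typo for the order $P$, $R$, $Q$ (compare the canonical form $MPXRs_1QY$ of Proposition \ref{il2}), and your invariant uses the correct order; also, your shape (iii) invariant should record ``exactly one $M$'' rather than merely ``the first letter is $M$,'' though this is immediate since none of \eqref{i5}--\eqref{i14} involves $M$ at all.
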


\begin{proof}

For any nonzero word $W$ let us introduce the following notions:

$I_0(W)$ -- number of letters $L$ and $M$ in the word $W$.

$I_1(W)$ -- number of letters  $L$ and $P$  in the word $W$.

$I_2(W)$ -- number of letters $L$, $g$, $R$  in the word $W$.

$I_3(W)$ -- number of letters $L$, $g$, $Q$ in the word $W$.

$I_4(W)$ -- number of letters $L$, $g$, $t_1$, $t_2$, $t_3$, $s_1$, $s_2$ in the word $W$.

Note that, the numbers $I_0$, $I_1$, $I_2$, $I_3$, $I_4$ are equal for any left and right part of any defining relation.
Therefore, they are invariant with respect to word equivalence. 
According to Proposition \ref{il1}, $W$ can be simplified to $LA$, where
$A$ is a product of letters $a_1,\,a_2,\,a_3$. Note
$I_0(LA)=I_1(LA)=I_2(LA)=I_3(LA)=I_4(LA)=1$. 
Therefore, for any nonzero word all the five invariants equal $1$.

If a word contains letter $L$, then, according to Proposition \ref{il1}, it does not contain letters $P$, $g$,
$R$, $Q$, $t_1$, $t_2$, $t_3$, $s_1$ and $s_2$. Therefore we have the option (i).

Assume the word does not contain the letter $L$. Thus it contains $M$ ($I_0=1$), moreover $M$ can be only the first from the left,
because there is only one 
relation with participation of $L$ and $M$: it is $L=MPg$. 
Furthermore,
there is only one letter $P$ in the word, because $I_1=1$. Consider two cases.

Let the word have letter $g$. Thus it is unique, and the word does not contain
$R$, $Q$, $t_1$, $t_2$, $t_3$, $s_1$ and $s_2$ because $I_2=I_3=I_4=1$.
 Thereby, the word is of type
$MA_1PA_2gA_3$, where $A_1$, $A_2$ and $A_3$ are words containing letters $a_1$, $a_2$ and $a_3$. Therefore, we have the option (ii).

Now assume the word does not contain letter $g$. Since $I_1=I_2=I_3=I_4=1$, the word has a unique letter
 $P$, unique letter $R$, unique letter $Q$ and a unique letter from the set
$\{ t_1$, $t_2$, $t_3$, $s_1$, $s_2\}$. According to Proposition \ref{il1} all words can be simplified to the type $LA$, 
which can be simplified to the type containing letters $P$, $Q$ and $R$ in this type. 
Moreover, there is no relation which can change this order.
Therefore, we have the option (iii)
\end{proof}

\begin{proposition} \label{il9}
Let  $U$ be a word containing letters $a_1$, $a_2$, $a_3$, and not containing squares of words.  Then  $LU\ne 0$.
\end{proposition}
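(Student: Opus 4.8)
The statement is the converse of Proposition \ref{il7}: there a square in $U$ was shown to force $LU=0$, and now we must certify that a square-free $U$ keeps $LU$ alive. The difficulty is intrinsic. To prove $LU\neq 0$ one cannot merely exhibit a single reduction, because $LU=0$ would mean that \emph{some} chain of relations collapses it, and relation \eqref{i5} allows the marker $Rs_1Q$ to be inserted at any adjacent pair $a_ia_j$, so many competing reduction paths exist. The clean way to rule out all of them simultaneously is to produce a homomorphism $\phi\colon H\to T$ into a semigroup-with-zero $T$ for which $\phi(LU)\neq 0_T$; since $\phi(0)=0_T$ and a homomorphism is path-independent, a single nonvanishing image settles the matter and sidesteps any need to prove confluence of the rewriting system.

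The plan is to take $T$ to be a semigroup of partial transformations of a configuration set $\Omega$, with $0_T$ the nowhere-defined (``crash'') map, and to let the generators act as the elementary moves of the scanning process that the relations already encode. A configuration records a square-free tape over $\{a_1,a_2,a_3\}$ together with the position and internal state of a single head built from the marker letters $P,g,R,Q,s_1,s_2,t_1,t_2,t_3$; the three collapsing relations \eqref{i1}, \eqref{i4}, \eqref{i12} are realized as the crash map. First I would use Proposition \ref{il1} and the invariants $I_0,\dots,I_4$ of Proposition \ref{il8} to see that every nonzero word equivalent to $LU$ has one of the three rigid shapes (i)--(iii); this tells us exactly which configurations $\Omega$ must carry and pins down the action of each generator on them.

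The core of the argument is then the bookkeeping: define $\phi$ on each of the sixteen generators and verify that all fourteen defining relations \eqref{i1}--\eqref{i14} hold as genuine identities of partial maps on $\Omega$. Propositions \ref{il3}, \ref{il4}, \ref{il5} and \ref{il6} are precisely the compound moves of this machine --- the slide of $Pg$ past one letter (Proposition~\ref{il6}, legal exactly when adjacent letters differ), and the symmetric matching step (Proposition \ref{il5}) that peels equal blocks $X\dots X$ off the two sides of the marker --- so they serve as correctness checks for the transitions rather than as inputs. With $\phi$ in hand I would compute $\phi(LU)=\phi(M)\phi(P)\phi(g)\phi(a_{i_1})\cdots\phi(a_{i_n})$ by running the head left to right across $U$.

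Here is where square-freeness enters and where the main obstacle lies. Sliding the head forward never crashes because consecutive letters of $U$ differ (Proposition \ref{il6}); since the head always stays immediately left of some $a_i$, the pattern $gx$ of \eqref{i4} never forms, and $M$ stays leftmost so \eqref{i1} is never triggered. Thus the only way a legitimately scanning configuration can still reach the crash is to create the pattern $PRs_1$ of \eqref{i12}, which by Proposition \ref{il5} appears exactly when the block left of the marker coincides with the block right of it, i.e. when $U$ contains a square $YY$. Hence for square-free $U$ the head completes its scan in a nowhere-crashing configuration, $\phi(LU)\neq 0_T$, and $LU\neq 0$. The hard part is not this one successful run but the consistency check that $\phi$ is well defined on \emph{all} of $\Omega$ --- that every relation, and in particular the nondeterministic insertion point of \eqref{i5} together with the matching of Proposition \ref{il5}, is honored as an identity in $T$, so that no alternative reduction can slip to zero. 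Once that verification is complete the theorem follows, and since there are infinitely many square-free words over three letters (Thue), it simultaneously yields the infinitude of the ideal $LH$ asserted in part (i) of the Theorem.
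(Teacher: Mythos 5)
Your strategy --- build a semigroup $T$ of partial transformations of a configuration set $\Omega$, send the crash relations to the nowhere-defined map, verify all fourteen relations as identities of partial maps, and then conclude $LU\neq 0$ from a single non-crashing run --- is a legitimate and genuinely different route from the paper's. The paper instead argues directly on reduction chains: it assumes $LU\equiv 0$, looks at the last applied zero-relation ($xL=0$, $gx=0$, or $PRs_1=0$), and rules out each case using the invariants $I_0,\dots,I_4$ of Proposition \ref{il8} together with finer invariants of the rewriting process (the quantity ``$d(P,Q)$ plus the number of $s$-letters'', the forced alternation of chains containing $s_1/s_2$ with chains containing $t_i$, and the fact that the letter adjacent to $P$ must match the letter adjacent to $Q$), concluding that a subword $PRs_1$ can only ever be produced if $U$ already contained a square.

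However, as written your proposal has a genuine gap, and it sits exactly where you admit ``the hard part'' is: the homomorphism $\phi$ is never constructed. You do not define $\Omega$, you do not define the sixteen partial maps, and you do not verify a single one of the relations \eqref{i1}--\eqref{i14}; all of this is deferred to ``bookkeeping.'' This is not routine bookkeeping --- designing $\Omega$ so that \eqref{i12} holds identically ($\phi(P)$, $\phi(R)$, $\phi(s_1)$ composing to the crash map on \emph{every} configuration) while a square-free scan never crashes requires precisely the invariant analysis the paper carries out, e.g.\ encoding in each configuration which letter $a_i$ sits left of $P$ and which block sits between $R$ and $Q$. Moreover, your pivotal claim that the pattern $PRs_1$ ``by Proposition \ref{il5} appears exactly when the block left of the marker coincides with the block right of it'' reverses what Proposition \ref{il5} actually proves: it gives only the ``if'' direction (a square allows $PRs_1$ to form, hence $LXYYZ=0$). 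The ``only if'' direction --- that no alternative chain of relations can ever manufacture $PRs_1$ without a square --- is the entire content of Proposition \ref{il9}, so citing \ref{il5} for it is circular. Until the machine is exhibited and its relations checked, your argument assumes what it must prove.
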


\begin{proof}
Assume that $LU\equiv 0$. 
Thus there exists a chain of equal words, beginning with $LU$ and finishing with zero. 
Any transformation in this chain uses some defining relation. The last transformation is one of four relations:
$xL=0$, $gx=0$, $t_ia_jQ=0$, $PRs_1=0$. 
Let us consider all of them and check that none of them can happen.

The relation $xL=0$ cannot happen, because if the letter $L$ is contained in the word, it should be the first left
(if not, the first left is $M$).

The relation $gx=0$ cannot happen, since if the word contains  $g$, according to Proposition \ref{il8}
it will be of type $MA_1PA_2gA_3$, where $A_3$ consists of $a_1$, $a_2$, $a_3$.

Consider the relation $t_ia_jQ=0$. Note, any word equivalent to $LA$ (where $A$
consists of $a_1$, $a_2$, $a_3$) satisfies the following property: if
$s_1$ appears in the word, the last letter $a$ from the subword between  $R$ and $Q$
coninsides with the letter, closest from the left to $P$.  
The letter $Q$ does not appear in the beginning of the word. It can appear only by relation \eqref{i5}: $a_iga_j=a_iRs_1Qa_j$, 
i.e. when there is no letter $a$ between $R$ and $Q$. 
Assume we have switched to some equivalent word, which contains $s_1$.
It is easy to see that during this switch we had to use relations
$Pa_it_i=a_iPs_1$ and $s_1Qa_i=t_ia_iQ$ alternately and equal number of times. (Similarly
with relations $Pa_jt_i=a_jPs_2$ and $s_2RQa_i=Rt_ia_iQ$). A last letter $a$ of the subword between  $R$ and $Q$ 
coincides with the letter closest from the left to $P$. 
Thereby, if $s_1$ is contained in the word, the last letter $a$ from the subword between $R$ and $Q$ 
coincides with the letter closest from the left to $P$. 

Assume we have a word $MA_0PA_1RA_2t_ia_jQA_3$.

Consider relation $PRs_1=0$.
Assume there exists a chain of equivalent transformations, from the word $A_0PA_1Rs_1QA_2$ to the word $B_0PRs_1B_1QB_2$,
where words $A_i$ and $B_i$ consist
of $a_1$, $a_2$, $a_3$, moreover $A_0A_1A_2=B_0B_1B_2$ is a lexicographical equality.
We shall call $d(X,Y)$ {\it a distance between letters $X$ and $Y$}, a number of letters $a_1$, $a_2$, $a_3$ 
between $X$ and $Y$.
Note, 
``$d(P,Q) +$ the number of letters $s_1$ or $s_2$ in the word'' \ 
is invariant in the chain of equivalent transformations
(in other transformations there are no words containing $g$).
Furthermore, note that with equivalent transformation we obtain one of $t_1$, $t_2$, $t_3$ from letters $s_1$ and $s_2$,
and one of $s_1$ and $s_2$ from $t_1$, $t_2$ and $t_3$.
Thus, words in the general chain of equivalence can be divided to alternating sets
in such a way that
in one set all words contain $t_i$, in the next  - $s_1$ or $s_2$, and so on. 
Consider transformation, when we go away from the chain, i.e. we have a first word of the next chain as a result.
After this transformation all the words will not have any of $s_i$ except for $s_1$.
This transformation finishes a chain and thus,
it has relations containing $s_2$ only from one side.
That is, $s_2$ transforms in one of three letters: $t_1$, $t_2$ or $t_3$. There are only two relations of this type:
$a_jPs_2=Pa_jt_i$ and $s_2RQa_i=Rt_ia_iQ$.
Assume a transformation, which finishes a chain, where all words contain $s_2$, 
occured by relation $a_jPs_2=Pa_jt_i$. Thus, the first word of the next chain 
(all words of which contain $t_i$) 
is of type
$A_0Pa_jt_iA_1RA_2QA_3$. 
We considered the last chain with $s_2$, therefore all subsequent chains
will have either $s_1$, or $t_i$. 
That is, all subsequent transformations between chains are implemented by relations $s_1Qa_i=t_ia_iQ$ and $Pa_it_i=a_iPs_1$.
It is easy to see that applying these relations to the word $A_0Pa_jt_iA_1RA_2QA_3$,
the distance between $P$ and $R$ cannot decrease, because $j\ne i$.

Assume a transformation, finishing a chain, occured with the relation $s_2RQa_i=Rt_ia_iQ$.
Then, the first word of the next chain (all words of which contain $t_i$) is of type
$A_0PA_1Rt_ia_iQA_3$. 

After that no defining relation containing 
$s_2$ will apply, hence all subsequent transformations between chains occur with relations $s_1Qa_i=t_ia_iQ$ and $Pa_it_i=a_iPs_1$.
Therefore further we will have the following rule: if a word has $s_1$, then letters $a_i$
closest from the left to $P$ and $Q$ coincide, and if a word has $t_j$, then
$a_i$ closest from the left to $Q$ equals $a_j$. Moreover, note that
$R$ does not change its position in the word.
Assume that at some moment we have obtained a subword  $PRs_1$. 
Thus, we have switched from the word $A_0PA_1Rt_ia_iQA_3$ to the word $A_0A_1PRs_1a_i\widetilde{A_3}QA_4$.
In each step to the right from $P$ and $Q$ we had the same letter $a_i$.
Therefore, words $A_1$ and $a_i\widetilde{A_3}$ lexicographically equal, i.e. from the very beginning, the word had a square subword.
A contradiction.

Thus, it is impossible to obtain a word which equals zero.

\end{proof}

\medskip

\pagestyle{headings}


\begin{thebibliography}{9}
\baselineskip 0pt
\bibitem[BI]{BI}
Belov, A.; Ivanov, I.
{\em Construction of semigroups with some exotic properties.}
Acta Appl. Math. \textbf{85} (2005), no. 1-3, 49--56. 

\bibitem[BBL]{BBL}
Belov, A. Ya.; Borisenko, V. V.; Latyshev, V. N.
{\em Monomial algebras.}
J. Math. Sci. (New York) \textbf{87} (1997), no. 3, 3463--3575. 

\bibitem[IPKB]{IPKB}
Ivanov-Pogodaev, I.; Kanel-Belov, A. {\em Construction of infinite finitely presented nilsemigroup}
arXiv:1412.5221

\bibitem[IPM]{IPM}
Ivanov-Pogodaev, I.; Malev, S. {\em Finite Gr\"obner Basis Algebra With Unsolvable Problems Of Nilpotency and Zero Divisors}
arXiv:1606.01566


\end{thebibliography}
\end{document}